\newcommand{\N}{\mathbb{N}}
\newcommand{\M}{\mathscr{M}}
\newtheorem{thm}[equation]{Theorem}
\begin{document}

\title{A short and elementary proof of Hanner's theorem}
\author{Aasa Feragen}
\keywords{Hanner's theorem, ANE, local ANE.}
\subjclass[2000]{54C55}

\address{University of Copenhagen, Department of Computer Science (DIKU), Universitetsparken 1, K-2100 Copenhagen Ø, Denmark}
\email{aasa@diku.dk}

\begin{abstract}
Hanner's theorem is a classical theorem in the theory of retracts and extensors in topological spaces, which states that a local ANE is an ANE. While Hanner's original proof of the theorem is quite simple for separable spaces, it is rather involved for the general case. We provide a proof which is not only short, but also elementary, relying only on well-known classical point-set topology.
\end{abstract}

\maketitle

\section{Introduction}

Denote by $\M$ the class of metrizable spaces. Hanner's theorem is a fundamental theorem in the theory of extensors and retracts, stating that a space which admits an open covering by ANEs for $\M$, is an ANE for $\M$. 

Proving that a space with a countable covering by open ANEs is an ANE is not hard \cite{hanner1}, but the original proof of the Hanner's general theorem is rather complicated \cite{hu}. We give a short and elementary proof, based on reducing the uncountable covering by ANEs to a countable covering by ANEs using a technique originating with J.~Milnor \cite{palais1}. Another short proof of the theorem has been given by J.~Dydak \cite{dydak} as part of his framework for the extension dimension theory.

\section{Preliminaries}

A metrizable space $Y$ is said to be an ANE for $\M$ if, given any space $X \in \M$ and any continuous map $f \colon A \to Y$ where $A$ is a closed subset of $X$, there exists a neighborhood $U$ of $A$ in $X$ and a continuous extension $F \colon U \to Y$ of $f$.

\begin{thm} \label{thm1}
\begin{itemize}
\item[i)] Any open subset of a space which is an ANE for $\M$ is an ANE for $\M$.
\item[ii)] If $X=\bigcup_{i \in I} U_i$ where the $U_i$ are disjoint open subsets of $X$ which are ANEs for $\M$, then $X$ is an ANE for $\M$.
\item[iii)] If $X=\bigcup_{n \in \N} U_n$ where the $U_n$ are open subsets of $X$ which are ANEs for $\M$, then $X$ is an ANE for $\M$.
\end{itemize}
\end{thm}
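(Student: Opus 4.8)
The plan is to handle the three parts in order of increasing difficulty, since (i) is immediate, (ii) reduces to a separation argument, and (iii) carries the real content. Throughout, when I invoke the ANE property I will write the test space as $Z \in \M$ with closed subset $A \subseteq Z$, to avoid clashing with the names in the statement.

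For (i), let $Y$ be an ANE, $V \subseteq Y$ open, and $f \colon A \to V$ continuous with $A \subseteq Z$ closed. I would simply regard $f$ as a map into $Y$, use the ANE property of $Y$ to extend it to $F \colon W \to Y$ on an open neighborhood $W$ of $A$, and then restrict to the open set $F^{-1}(V)$, which still contains $A$ because $f(A) \subseteq V$. This restriction is the required extension.

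For (ii), the key observation is that pairwise disjoint open sets covering $X$ are in fact clopen, since $U_i = X \setminus \bigcup_{j \neq i} U_j$. Hence for $f \colon A \to X$ the preimages $A_i := f^{-1}(U_i)$ are clopen in $A$ and partition it; being closed in $A$ and $A$ closed in $Z$, each $A_i$ is closed in $Z$. I would first check that $\{A_i\}$ is a \emph{discrete} family in $Z$: each point of $A$ has a neighborhood meeting only the single $A_i$ containing it (by openness of $A_i$ in $A$), and each point outside $A$ has a neighborhood missing $A$ altogether (by closedness of $A$). Each $f|_{A_i} \colon A_i \to U_i$ then extends to some $h_i \colon N_i \to U_i$ on an open neighborhood $N_i$ of $A_i$. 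Using the metric on $Z$ together with discreteness (equivalently, collectionwise normality of metric spaces), I would shrink the $N_i$ to a pairwise disjoint family of open sets $V_i \supseteq A_i$ and define $F$ on $\bigcup_i V_i$ by $F|_{V_i} = h_i|_{V_i}$; continuity is automatic because the $V_i$ are disjoint and open.

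Part (iii) is where the work lies. The engine is the two-set case: if $X = U_1 \cup U_2$ with both open ANEs, then $X$ is an ANE, proved by extending $f$ over each piece separately and patching across the overlap, where $f$ takes values in $U_1 \cap U_2$. Granting this, finite unions follow by induction, so each $Y_k := U_1 \cup \cdots \cup U_k$ is an ANE. For the countable union I would fix a closed shrinking $\{B_n\}$ of the cover $\{f^{-1}(U_n)\}$ of $A$ (available since $A$ is metrizable, hence normal), set $G_k = B_1 \cup \cdots \cup B_k$, and build the extension inductively: extend $f|_{G_1}$ into $Y_1$, then at each stage glue the current extension (into $Y_{k-1}$) with an extension of $f|_{B_k}$ into $U_k$ via the two-set lemma, obtaining an extension of $f|_{G_k}$ into $Y_k$ on a neighborhood of $G_k$. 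The \textbf{main obstacle} is that, done naively, the neighborhoods produced at successive stages may shrink, so their ``limit'' need not be a neighborhood of all of $A = \bigcup_k G_k$. Overcoming this requires arranging the induction so that each new extension agrees with the previous one on a prescribed, stably open region and so that the domains retain fixed neighborhoods of the $G_k$; the resulting nested construction then yields a genuine open neighborhood $U$ of $A$ on which the partial extensions stabilize locally and patch to a continuous $F \colon U \to X$.
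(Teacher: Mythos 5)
Your parts (i) and (ii) are correct. Part (i) is the standard restriction argument (this is what the paper dismisses as trivial). Part (ii) is a genuine, complete proof: the family $\{A_i\}$ is indeed discrete in $Z$ (your two-case check is right), metric spaces are collectionwise normal, and gluing over pairwise disjoint open sets $V_i$ is automatically continuous. Note that the paper itself proves neither (ii) nor (iii) --- it cites Hanner's article --- so for these parts the comparison is really against Hanner's original arguments.

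Part (iii) has a genuine gap, and it sits exactly where you flag it. Your strategy --- two-set lemma, finite unions by induction, closed shrinking $\{B_n\}$ of $\{f^{-1}(U_n)\}$, inductive gluing --- is the standard one, but the ``main obstacle'' you name (that the neighborhoods produced at successive stages may shrink, so their limit need not be a neighborhood of $A$) is the entire technical content of the countable-union theorem, and your proposed resolution is only a description of what a resolution would have to accomplish, not a construction. When you glue the stage-$(k-1)$ extension with an extension of $f|_{B_k}$ via the two-set lemma, the lemma hands you \emph{some} neighborhood of $G_k$ and modifies the map on an uncontrolled region near the overlap; nothing in your setup forces the new extension to agree with the old one on a neighborhood of $G_{k-1}$ that is fixed once and for all, which is what ``stabilize locally'' requires. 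Arranging this is precisely the delicate bookkeeping in Hanner's proof (choosing, in advance, nested systems of neighborhoods and performing the gluings so that each stage is frozen on a prescribed open set around $G_{k-1}$ before the next stage touches anything), and without it the final set on which $F$ is defined can fail to contain a neighborhood of points of $A$ lying in infinitely many of the boundaries involved. A secondary compression: your two-set lemma itself (``patching across the overlap'') needs the splitting of $A$ into $A \setminus f^{-1}(U_2)$ and $A \setminus f^{-1}(U_1)$, separation by normality, and an extension into $U_1 \cap U_2$ (an ANE by part (i)); that sketch is standard and fillable, but the stabilization argument in the countable case is not, and as written your proof of (iii) does not go through.
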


\begin{proof}
Claim \emph{i)} is trivial, and proofs of claims \emph{ii)}, and \emph{iii)} can be found in Hanner's article \cite{hanner1}.
\end{proof}

\section{Hanner's general theorem}

Our theorem is the following

\begin{thm}
If $X \in \M$ and $X=\bigcup_{i \in I} U_i$ where the $U_i$ are open subsets of $X$ which are ANEs for $\M$, then $X$ is an ANE for $\M$.
\end{thm}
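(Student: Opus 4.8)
The plan is to reduce the arbitrary open cover $\{U_i\}_{i \in I}$ to a \emph{countable} cover of $X$ by open ANEs and then simply invoke Theorem~\ref{thm1}. The reduction is the Milnor technique mentioned in the introduction. Since $X$ is metrizable it is paracompact, so I would first fix a locally finite partition of unity $\{\phi_i\}_{i \in I}$ subordinate to the cover: continuous maps $\phi_i \colon X \to [0,1]$ with $\operatorname{supp}\phi_i \subseteq U_i$ and $\sum_{i \in I}\phi_i \equiv 1$, indexed by the same set $I$.

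Next, for each finite subset $S \subseteq I$ I would set
$$ W_S = \Bigl\{ x \in X : \min_{i \in S}\phi_i(x) > \sup_{j \in I \setminus S}\phi_j(x) \Bigr\}, $$
and then define $V_n = \bigcup_{|S| = n} W_S$ for each $n \in \N$. The heart of the argument is to check four properties. (a) Each $W_S$ is open: although the supremum runs over a possibly infinite family, local finiteness forces all but finitely many $\phi_j$ to vanish near any given point, so locally the supremum is a maximum of finitely many continuous functions, and the strict inequality then cuts out an open set. (b) For fixed cardinality $|S| = n$ the sets $W_S$ are pairwise disjoint: if some $x$ lay in both $W_S$ and $W_{S'}$ with $S \neq S'$, then choosing $i \in S \setminus S'$ and $i' \in S' \setminus S$ (both nonempty, as $|S| = |S'|$) would give $\phi_i(x) > \phi_{i'}(x)$ and $\phi_{i'}(x) > \phi_i(x)$, a contradiction. (c) $W_S \subseteq U_i$ for each $i \in S$, since $x \in W_S$ forces $\phi_i(x) > 0$, hence $x \in \operatorname{supp}\phi_i \subseteq U_i$. (d) The $V_n$ cover $X$: for any $x$ only finitely many $\phi_i(x)$ are nonzero and they sum to $1$, so taking $S$ to be the finite nonempty set of indices attaining $\max_i \phi_i(x)$ places $x$ in $W_S$, hence in $V_{|S|}$.

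Granting these, the conclusion is immediate. By (a) and (c) each $W_S$ is an open subset of the ANE $U_i$, hence an ANE by Theorem~\ref{thm1}(i); by (b) each $V_n$ is a disjoint union of open ANEs, hence an ANE by Theorem~\ref{thm1}(ii); and by (d) we have $X = \bigcup_{n \in \N} V_n$, a countable union of open ANEs, so $X$ is an ANE by Theorem~\ref{thm1}(iii).

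The step I expect to be the main obstacle is the openness in property (a). The naive supremum over the infinite index set $I \setminus S$ is, a priori, only lower semicontinuous, and $\{\min > \text{l.s.c.}\}$ need not be open in general. The entire construction rests on using local finiteness to replace this supremum by a finite maximum in a neighborhood of each point, upgrading it to a continuous function; without the locally finite partition of unity the sets $W_S$ could fail to be open, which is precisely why the paracompactness of metrizable spaces is the essential input.
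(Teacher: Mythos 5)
Your proposal is correct and is essentially the paper's own proof: the same subordinate partition of unity, the same sets $W_S$ (the paper's $W(T)$), the same disjointness-for-equal-cardinality observation, and the same three-step appeal to Theorem~\ref{thm1} parts \emph{i)}, \emph{ii)}, \emph{iii)}. The openness point you flag as the main obstacle is handled in the paper by writing $W(T)=u_T^{-1}(0,1]$ for a map $u_T$ asserted to be continuous, and that continuity rests on exactly the local finiteness argument you spell out.
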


\begin{proof}
Find a partition of unity $\{\varphi_i \colon X \to [0,1]\}_{i \in I}$ which is subordinate to the covering $\{U_i\}_{i \in I}$ of $X$. For each finite subset $T \subset I$ we denote
$$
W(T)=\{x \in X|\varphi_i(x)>\varphi_j(x) \ \forall \ i \in T, \ \forall \ j \in I \setminus T\}.
$$
This set is open because $W(T)=u_T^{-1}(0,1]$ for the continuous map
$$
u_T \colon X \to [0,1], \quad u_T(x)=\max \{0, \min \{\varphi_i(x)-\varphi_j(x)|i \in T,\ j \in I \setminus T\}\}.
$$
Furthermore, $W(T) \subset \varphi_i^{-1}(0,1] \subset U_i$ for each $i \in T$ since $x \in W(T)$ implies $\varphi_i(x)>\varphi_j(x) \ge 0$ for each $i \in T$ and $j \in I \setminus T$. It follows that $W(T)$ is an ANE for $\M$ by Theorem~\ref{thm1} part \emph{i)}.

Note that if $\textrm{Card}(T)=\textrm{Card}(T')$ and $T \neq T'$, then $W(T) \cap W(T')=\emptyset$, since otherwise for some $x \in W(T) \cap W(T')$, $i \in T \setminus T'$ and $j \in T' \setminus T$ we have simultaneously $\varphi_i(x)<\varphi_j(x)$ and $\varphi_j(x)>\varphi_i(x)$, which is impossible.

Define 
$$
W_n=\bigcup \{W(T)|\textrm{Card}(T)=n\}.
$$
Then $W_n$ is an ANE for $\M$ by Theorem~\ref{thm1} part \emph{ii)}.

But now $X=\bigcup_{n \in \N} W_n$ is an ANE for $\M$ by Theorem~\ref{thm1}, part \emph{iii)}.
\end{proof}

\section{Acknowledgements}

The author wishes to thank the Magnus Ehrnrooth Foundation for financial support, and the Department of Mathematical Sciences at the University of Aarhus for its hospitality.

\end{document}